%
%
%

\documentclass[graybox, draft]{svmult}

\usepackage{helvet}  
\usepackage{courier} 
\usepackage{type1cm} 
%
\usepackage{makeidx}  
\usepackage{graphicx} 
\usepackage{multicol} 
\usepackage[bottom]{footmisc}


\usepackage{amssymb,amsmath,graphicx}
\DeclareMathOperator{\n}{\mathfrak{n}}

\DeclareMathOperator{\p}{p}

\DeclareMathOperator{\T}{\mathfrak{t}}

\DeclareMathOperator{\Hom}{Hom}
\DeclareMathOperator{\End}{End}
\DeclareMathOperator{\Aut}{Aut}
\DeclareMathOperator{\Der}{Der}
\DeclareMathOperator{\Ker}{Ker}
\DeclareMathOperator{\Img}{Im}
\DeclareMathOperator{\id}{Id}

\DeclareMathOperator{\spaned}{span}

\usepackage{algpseudocode, algorithm, algorithmicx}

\begin{document}

\title*{Derivations and automorphisms of free nilpotent Lie algebras and their quotients}
\titlerunning{Derivations and automorphisms of nilpotent Lie algebras}
\author{Pilar Benito and Jorge Rold\'an-L\'opez}
\institute{Pilar Benito \at Universidad de La Rioja, Dpto. Matem\'aticas y Computaci\'on, Edificio CCT - Madre de Dios 53, 26006 Logro\~no, La Rioja, Spain; \email{pilar.benito@unirioja.es}
\and Jorge Rold\'an-L\'opez \at Universidad de La Rioja, Dpto. Matem\'aticas y Computaci\'on, Edificio CCT - Madre de Dios 53, 26006 Logro\~no, La Rioja, Spain; \email{jorge.roldanl@unirioja.es}}
\maketitle

\abstract{
Let $\n_{d,t}$ be the free nilpotent Lie algebra of type $d$ and nilindex $t$. Starting out with the derivation algebra and the  automorphism group of $\n_{d,t}$, we get a natural description of derivations and automorphisms of any generic nilpotent Lie algebra of the same type and nilindex. Moreover, along the paper we discuss several examples to illustrate the obtained results.
}

\section{Introduction}
\label{sec:1}

In the middle of the 20th century, the study of derivations and automorphisms of algebras was a central topic of research. It is well known that many linear algebraic Lie groups and their Lie algebras arise from the automorphism groups and the derivation algebras of certain nonassociative algebras. In fact, for a given finite-dimensional real nonassociative algebra $A$, the automorphism group $\Aut A$ is a closed Lie subgroup of the lineal group $\mathtt{GL}\mathrm{(}A\mathrm{)}$ and the derivation algebra $\Der A$ is the Lie algebra of $\Aut A$ (see~\cite[Proposition 7.1 and 7.3, Chapter 7]{SaWa-73}).

Paying attention to Lie algebras, a lot of research papers on this topic are devoted to the study of the interplay between the structures of their derivation algebras, their groups of automorphisms and Lie algebras themselves (see~\cite{VaVa-06} and references therein). We point out two simple but elegant results on this direction. According to \cite{BoSe-53}, any Lie algebra that has an automorphism of prime period without nonzero fixed points is nilpotent. The same result is valid in the case of Lie algebra has a nonsingular derivation (see~\cite[Theorem~2]{Ja-55}). So, automorphisms and derivations and the nature of their elements are interesting tools in the study of structural properties of algebras.

The main motif of this paper is to describe the group of automorphisms and the algebra of derivations of any finite-dimensional $t$-step nilpotent Lie algebra $\n$ (this means that $\n^t\neq 0=\n^{t+1}$) generated by a set $U$ of $d$ elements. The description will be given through the derivation algebra and the automorphism group of the free $t$-step nilpotent Lie algebra $\n_{d,t}$ generated by $U$. Denoting by $\mathfrak{u}=\spaned\langle U\rangle$, the elements of the derivation algebra, $\Der \n_{d,t}$, arise by extending and combining, in a natural way, linear maps from $\mathfrak{u}$ into $\mathfrak{u}$ and from $\mathfrak{u}$ to $\n_{d,t}^2$. The group of automorphisms, $\Aut \n_{d,t}$, is described through automorphism induced by elements of the general linear group $\mathtt{GL}(\mathfrak{u})$ and automophisms provide by linear maps from $\mathfrak{u}$ to $\n_{d,t}$ which induce the identity mapping on $\dfrac{\n_{d,t}}{\n_{d,t}^2}$.

The paper splits into three sections starting from number 2. Section~2 collects some known results about derivations and automorphisms of free nilpotent Lie algebras. This information let establish the structure of derivations and automorphisms of any nilpotent Lie algebra in Theorems~\ref{Tder} and~\ref{Taut}. Section~3 contains examples which show the way to compute automorphism groups and derivation algebras. This last section may also be consider as a short illustration of techniques which may be used in this regard.

Along the paper, vector spaces are of finite dimension over a field $\mathbb{F}$ of characteristic zero. All unexplained definitions may be found in \cite{Ja-62} or \cite{Hu-72}.

\section{Theoretical results}
\label{sec:2}

We begin by recalling some basics facts and notations about Lie algebras. Let $\n$ be a Lie algebra with bilinear product $[x,y]$, and $\Aut \n$ denote the automorphism group of $\n$, that is, the set of linear bijective maps $\varphi\colon\n\to \n$ such that $\varphi[x,y]=[\varphi(x),\varphi(y)]$. Also, let $\Der \n$ denote the set of derivations of $\n$, so, the set of linear maps $d\colon A\to A$ such that $d[x,y]=[d(x),y]+[x,d(y)]$. If $V, W$ are subspaces of $\n$, $[V,W]$ denotes the space spanned by all products $[v,w]$, $v\in V, w\in W$. The terms of the lower central series of $\n$ are defined by $\n^1=\n$, and $\n^{i+1}=[\n,\n^i]$ for $i\geq 2$. If $\n^t\neq 0$ and $\n^{t+1}=0$, then $\n$ is said nilpotent of nilpotent index or nilindex $t$. We refer to them also as $t$-step nilpotent. A nilpotent algebra $\n$ is generated by any set $\{y_1,\dots,y_d\}$ of $\n$ such that $\{y_i+\n^2:i=1,\dots,d\}$ is a basis of $\frac{\n}{\n^2}$ (see \cite[Corollary 1.3]{Ga-73}). The dimension of this space is called the type of $\n$ and $\{y_1,\dots,y_d\}$ is said \emph{minimal set of generators} (m.s.g.). So, the type is just the dimension of any subspace $\mathfrak{u}$ such that $\n=\mathfrak{u}\oplus[\n,\n]$.

The free $t$-step nilpotent Lie algebra on the set $U=\{x_1,\dots,x_d\}$ (where $d\geq 2$) is the quotient algebra $\n_{d,t}=\mathfrak{FL}(U)/\mathfrak{FL}(U)^{t+1}$, where $\mathfrak{FL}(U)$ is the free Lie algebra generated by $U$ (see~\cite[Section~4, Chapter~V]{Ja-62}). The elements of $\mathfrak{FL}(U)$ are linear combinations of monomials 
$[x_{i_1},\dots, x_{i_s}]=[\dots[[x_{i_1},x_{i_2}],x_{i_3}],\dots, x_{i_s}]$, $s\geq 1$ and $x_{i_j}\in U$. So, the free nilpotent algebra $\n_{d,t}$ is generated as vector space by  $s$-monomials $[x_{i_1},\dots, x_{i_s}]$, for $1\leq s\leq t$.

Again, if we set $\mathfrak{u}=\spaned\langle U\rangle$, the subspace $\mathfrak{u}^s=[\mathfrak{u}^{s-1},\mathfrak{u}]$ is the linear span of the $s$-monomials. Thus $\n_{d,t}$ is an $\mathbb{N}$-graded algebra whose $s$-th homogeneous component is $\mathfrak{u}^s$. The dimension of any subspace $\mathfrak{u}^s$, $1\leq s\leq t$ is:
\begin{equation*}
\frac{1}{s}\sum_{a\mid s}\mu(a)d^{s/a},
\end{equation*}where $\mu$ is the M\"oebius function.

The algebra $\n_{d,t}$ enjoys the following \emph{Universal Mapping Property} (see~\cite[Proposition 1.4]{Ga-73} and \cite[Proposition 4]{Sa-71}): for any $k$-step nilpotent Lie algebra~$\n$ with $k\leq t$ of type $d$, and any $d$-elements $y_1,\dots,y_d$ of $\n$, the correspondence $x_i\mapsto y_i$ extends to a unique algebra homomorphism $\n_{d,t}\to \n$. In the particular case that $\{y_1,\dots, y_d\}$ is a m.s.g., the image contains a set of generators, so the map is surjective. Therefore, any $t$-step nilpotent Lie algebra of type $d$ is an homomorphic image $\dfrac{\n_{d,t}}{\T}$ where $\T$ is an ideal such that $\T\subseteq \n_{d,t}^2$ and $\n_{d,t}^t \not \subseteq \T$.

Derivations and automorphisms of $\n_{d,t}$ are completely determine by their effect on $\mathfrak{u}$. Conversely, any linear map from $\mathfrak{u}$ into $\n_{d,t}$ (bijection from a basis of $\mathfrak{u}$ to any m.s.g.) determines a unique derivation (automorphism) of $\n_{d,t}$. This assertion is covered by the next result and its corollary. A detailed proof can be found in \cite[Propositions 2 and 3]{Sa-71}.

\begin{proposition}\label{der-aut-free} Let $\varphi$ denote any linear map from the vector space $\mathfrak{u}=\spaned \langle x_1,\dots,x_d\rangle$ into $\n_{d,t}$, where $\{x_1,\dots, x_d\}$ is a m.s.g. of $\n_{d,t}$. Then: 
\begin{enumerate}
\item[a)] $\varphi$ extends to a derivation of $\n_{d,t}$ by declaring$$d_\varphi([x_{\alpha_1},\dots,x_{\alpha_r}])=\sum_{1\leq i\leq r}[x_{\alpha_1},\dots, \varphi(x_{\alpha_i}),\dots,x_{\alpha_r}].$$
\item[b)] $\varphi$ extends to an algebra homomorphism of $\n_{d,t}$ by declaring $$\Phi_\varphi([x_{\alpha_1},\dots,x_{\alpha_r}])=[\varphi(x_{\alpha_1}),\dots,\varphi(x_{\alpha_r})].$$
\end{enumerate}
Moreover if $\p_\mathfrak{u}$ stands for the projection map from $\n_{d,t}$ into $\mathfrak{u}$, then $\Phi_\varphi$ is an automorphism iff $\{\p_\mathfrak{u}(\varphi(x_1)),\dots, \p_\mathfrak{u}(\varphi(x_n))\}$ is a linearly independent set.
\end{proposition}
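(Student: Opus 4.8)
The plan is to reduce the whole statement to the Universal Mapping Property (UMP) quoted above together with the $\mathbb{N}$-grading of $\n_{d,t}$, which in particular gives $\n_{d,t}=\mathfrak{u}\oplus\n_{d,t}^2$, so that $\p_\mathfrak{u}$ is the projection onto $\mathfrak{u}$ along $\n_{d,t}^2$. Part b) is then almost immediate: setting $y_i=\varphi(x_i)\in\n_{d,t}$ and applying the UMP to the $t$-step nilpotent algebra $\n_{d,t}$ itself, we get a unique algebra homomorphism $\Phi_\varphi\colon\n_{d,t}\to\n_{d,t}$ with $\Phi_\varphi(x_i)=y_i$; multiplicativity forces $\Phi_\varphi([x_{\alpha_1},\dots,x_{\alpha_r}])=[\varphi(x_{\alpha_1}),\dots,\varphi(x_{\alpha_r})]$, which is the asserted formula, and uniqueness comes for free.

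For part a) I would run a parallel argument after a detour through a semidirect product. Consider $\mathcal{A}=\n_{d,t}\ltimes\n_{d,t}$, that is, the vector space $\n_{d,t}\oplus\n_{d,t}$ with bracket $[(a,m),(b,n)]=([a,b],\,[a,n]+[m,b])$, the second summand being $\n_{d,t}$ regarded as a module over itself via the adjoint action. A routine check gives the Jacobi identity, and an easy induction yields $\mathcal{A}^{k}=\n_{d,t}^{k}\oplus\n_{d,t}^{k}$ for all $k$; in particular $\mathcal{A}^{t+1}=0$, so $\mathcal{A}$ is nilpotent of nilindex $t$. Hence $x_i\mapsto(x_i,\varphi(x_i))$ extends to an algebra homomorphism $\psi\colon\n_{d,t}\to\mathcal{A}$ (this is the UMP of the free Lie algebra $\mathfrak{FL}(U)$, which descends to $\n_{d,t}$ because $\mathcal{A}^{t+1}=0$). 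Writing $\psi=(\psi_1,\psi_2)$, the component $\psi_1$ is an endomorphism of $\n_{d,t}$ fixing every $x_i$, hence $\psi_1=\id$; comparing the two components in $\psi[u,v]=[\psi u,\psi v]$ then gives $\psi_2[u,v]=[u,\psi_2 v]+[\psi_2 u,v]$, so $d_\varphi:=\psi_2$ is a derivation with $d_\varphi(x_i)=\varphi(x_i)$. The explicit Leibniz-type formula of a) now follows by induction on $r$ from $d_\varphi$ being a derivation, and uniqueness is clear since a derivation is determined by its values on $x_1,\dots,x_d$.

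It remains to prove the automorphism criterion. As $\n_{d,t}$ is finite-dimensional, $\Phi_\varphi\in\Aut\n_{d,t}$ if and only if $\Phi_\varphi$ is surjective. Any algebra homomorphism carries $\n_{d,t}^2$ into itself, and $\varphi(x_i)\equiv\p_\mathfrak{u}(\varphi(x_i))\pmod{\n_{d,t}^2}$ by the decomposition $\n_{d,t}=\mathfrak{u}\oplus\n_{d,t}^2$; since $\Img\Phi_\varphi$ is the subalgebra generated by the $\varphi(x_i)$, this gives $\Img\Phi_\varphi+\n_{d,t}^2=\spaned\langle\p_\mathfrak{u}(\varphi(x_1)),\dots,\p_\mathfrak{u}(\varphi(x_d))\rangle+\n_{d,t}^2$. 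If the vectors $\p_\mathfrak{u}(\varphi(x_i))$ are linearly independent they form a basis of the $d$-dimensional space $\mathfrak{u}$, whence $\Img\Phi_\varphi+\n_{d,t}^2=\n_{d,t}$; then the standard nilpotent ``Nakayama'' step (a subalgebra $\mathcal{S}$ with $\mathcal{S}+\n_{d,t}^2=\n_{d,t}$ satisfies $\mathcal{S}+\n_{d,t}^{k}=\n_{d,t}$ for every $k$, hence $\mathcal{S}=\n_{d,t}$ since $\n_{d,t}^{t+1}=0$) applied to $\mathcal{S}=\Img\Phi_\varphi$ shows $\Phi_\varphi$ is onto. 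Conversely, if $\Phi_\varphi$ is an automorphism it induces a bijection on $\n_{d,t}/\n_{d,t}^2\cong\mathfrak{u}$ sending the basis $\{x_i+\n_{d,t}^2\}$ to $\{\p_\mathfrak{u}(\varphi(x_i))+\n_{d,t}^2\}$, so the $\p_\mathfrak{u}(\varphi(x_i))$ form a basis of $\mathfrak{u}$ and are in particular linearly independent.

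The only genuinely delicate point is the existence and well-definedness of $d_\varphi$ in a): one must know that the Leibniz prescription on the (non-free) spanning set of monomials is consistent with antisymmetry and the Jacobi identity, and this is precisely what the passage through $\mathcal{A}=\n_{d,t}\ltimes\n_{d,t}$ is designed to bypass. Everything else is bookkeeping with the grading and the universal property.
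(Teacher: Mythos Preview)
Your argument is correct. The paper, however, does not supply its own proof of this proposition: it states the result and defers to \cite[Propositions~2 and~3]{Sa-71}. So there is no in-paper argument to compare against directly.

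Your handling of b) and of the automorphism criterion is the expected one, using the UMP together with the decomposition $\n_{d,t}=\mathfrak{u}\oplus\n_{d,t}^2$ and the standard nilpotent ``Nakayama'' step. For a), the detour through $\mathcal{A}=\n_{d,t}\ltimes\n_{d,t}$ (equivalently $\n_{d,t}\otimes\mathbb{F}[\epsilon]/(\epsilon^2)$) is an elegant device: it converts the well-definedness problem for the Leibniz prescription into a single application of the homomorphism UMP, so you never have to verify by hand that the formula is compatible with antisymmetry and the Jacobi identity. The more direct route, closer in spirit to Sato, is to extend $\varphi$ first to a derivation of the free Lie algebra $\mathfrak{FL}(U)$, where there are no relations among the generators, and then observe that any derivation preserves the ideal $\mathfrak{FL}(U)^{t+1}$ and hence descends to $\n_{d,t}$. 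Both approaches ultimately rest on the same universal property; yours packages it more compactly at the cost of introducing the auxiliary algebra $\mathcal{A}$. One small caveat worth flagging: the UMP as recorded in the paper is stated only for targets ``of type $d$'', which your $\mathcal{A}$ is not; you correctly sidestep this by invoking the UMP of $\mathfrak{FL}(U)$ itself and then passing to the quotient, so you are using a slightly stronger (but equally standard) statement than the one the text provides.
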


\begin{corollary} Let $\n_{d,t}$ be the free $t$-nilpotent Lie algebra on $d$-generators $x_1,\dots, x_d$ and $\mathfrak{u}=\spaned\langle x_i\rangle$. The derivation algebra and the automorphism group of $\n_{d,t}$ are described as 
$\Der {\n_{d,t}}=\{d_\varphi: \varphi\in \Hom(\mathfrak{u},{\n_{d,t}})\}$ and $\Aut {\n_{d,t}}=\{\Phi_\varphi: \varphi\in \Hom(\mathfrak{u},{\n_{d,t}})\text{ and }\{\p_\mathfrak{u}(\varphi(x_1)), \dots, \p_\mathfrak{u}(\varphi(x_d))\} \textrm{  m.s.g.}\}$.
\end{corollary}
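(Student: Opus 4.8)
The plan is to deduce the corollary from Proposition~\ref{der-aut-free} by verifying two inclusions in each of the two identities. The inclusion $\supseteq$ is immediate from the proposition: part a) gives $d_\varphi\in\Der\n_{d,t}$ for every $\varphi\in\Hom(\mathfrak{u},\n_{d,t})$, and part b) together with the last assertion of the proposition gives $\Phi_\varphi\in\Aut\n_{d,t}$ whenever $\{\p_\mathfrak{u}(\varphi(x_1)),\dots,\p_\mathfrak{u}(\varphi(x_d))\}$ is linearly independent; being $d$ independent vectors in the $d$-dimensional space $\mathfrak{u}\cong\n_{d,t}/\n_{d,t}^2$, this set is a basis of $\n_{d,t}/\n_{d,t}^2$, hence a m.s.g.\ in the sense fixed in Section~\ref{sec:2}. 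So the real content is the reverse inclusion: every derivation, resp.\ automorphism, of $\n_{d,t}$ is of the stated form.

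For $\Der\n_{d,t}\subseteq\{d_\varphi\}$, take $D\in\Der\n_{d,t}$ and set $\varphi:=D|_{\mathfrak{u}}$. Since the values $D(x_i)$ lie in $\n_{d,t}$ (and need not lie in $\mathfrak{u}$), this is a legitimate element of $\Hom(\mathfrak{u},\n_{d,t})$, and part a) produces a derivation $d_\varphi$ with $d_\varphi(x_i)=D(x_i)$. The key step is to show $D=d_\varphi$. Consider $S=\{z\in\n_{d,t}: D(z)=d_\varphi(z)\}$: it is a linear subspace, and if $a,b\in S$ then the Leibniz rule forces $D[a,b]=[Da,b]+[a,Db]=[d_\varphi a,b]+[a,d_\varphi b]=d_\varphi[a,b]$, so $[a,b]\in S$. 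Thus $S$ is a subalgebra containing the generating set $\{x_1,\dots,x_d\}$, hence $S=\n_{d,t}$ and $D=d_\varphi$.

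For $\Aut\n_{d,t}$, given $\Phi\in\Aut\n_{d,t}$ put $\varphi:=\Phi|_{\mathfrak{u}}\in\Hom(\mathfrak{u},\n_{d,t})$; part b) yields the homomorphism $\Phi_\varphi$ with $\Phi_\varphi(x_i)=\Phi(x_i)$, and the same ``agreement set is a subalgebra'' argument, now using multiplicativity ($\Phi[a,b]=[\Phi a,\Phi b]$ and likewise for $\Phi_\varphi$), gives $\Phi=\Phi_\varphi$. It remains to check that $\{\p_\mathfrak{u}(\varphi(x_1)),\dots,\p_\mathfrak{u}(\varphi(x_d))\}$ is a m.s.g. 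Since $\n_{d,t}^2$ is a characteristic ideal, $\Phi$ induces an automorphism $\bar\Phi$ of $\n_{d,t}/\n_{d,t}^2$; under the identification $\n_{d,t}/\n_{d,t}^2\cong\mathfrak{u}$ afforded by the decomposition $\n_{d,t}=\mathfrak{u}\oplus\n_{d,t}^2$, the elements $\p_\mathfrak{u}(\Phi(x_i))$ correspond to $\bar\Phi(\bar x_i)$ and therefore form a basis of $\mathfrak{u}$, i.e.\ a m.s.g. This completes the remaining inclusion.

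I do not anticipate a serious obstacle. The only point that needs care is that a derivation or automorphism of $\n_{d,t}$ need not stabilize $\mathfrak{u}$, which is exactly why the codomain of $\varphi$ in Proposition~\ref{der-aut-free} is the full algebra $\n_{d,t}$ and not $\mathfrak{u}$; and passing to $\n_{d,t}/\n_{d,t}^2$ in the automorphism case is justified because $\n_{d,t}^2$ is characteristic and complements $\mathfrak{u}$.
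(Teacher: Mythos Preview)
Your proof is correct and is precisely the natural argument the paper has in mind: the paper states the corollary without proof, treating it as an immediate consequence of Proposition~\ref{der-aut-free} (and referring to \cite{Sa-71} for details), and your restriction-and-agreement-set argument is exactly how one unpacks that implication. There is nothing to add or correct.
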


\begin{remark} The Levi factor $\mathfrak{S}_{d,t}$ of $\Der \n_{d,t}$ is given by the maps $d_\varphi$ for $\varphi \in \mathfrak{sl}(\mathfrak{u})$. Clearly, $\mathfrak{S}_{d,t}$ is isomorphic to the special Lie algebra $\mathfrak{sl}_d(\mathbb{F})$. The elements of the nilpotent radical $\mathfrak{N}_{d,t}$ are the linear maps $d_\varphi$ where $\varphi \in \Hom(\mathfrak{u},\n_{d,t}^2)$. And the solvable radical is just $\mathfrak{R}_{d,t}=k\cdot id_{d,t}\oplus \mathfrak{N}_{d,t}$ where $id_{d,t}(a_k)=k\cdot a_k$ for any $a_k\in \mathfrak{u}^k$ (see \cite[Proposition 2.4]{BeCo-14}). 
\end{remark}

\begin{remark} The group $\Aut \n_{d,t}$ is the semidirect product of the general linear group $\mathtt{GL}(d,t)$, obtained from the automorphisms $\Phi_\varphi$ where $\varphi \in GL(\mathfrak{u})$, and the nilpotent group $\mathtt{NL}(d,t)$, whose elements are $\Phi_\sigma$ and $\sigma=\id_\mathfrak{u}+\delta$ and $\delta \in \End(\mathfrak{u},n_{d,t}^2)$ (see \cite[Proposition 3.1]{BeCoLa-17}). 
\end{remark}

For any ideal $\T$ of $\n_{d,t}$ such that $\n_{d,t}^t \not \subseteq \T\subseteq \n_{d,t}^2$, let denote by $\Der_{\T} {\n_{d,t}}$ and $\Der_{\n_{d,t}, \T} \n_{d,t}$ the subset of derivations which map $\T$ into itself, and $\n_{d,t}$ into $\T$ respectively. Both sets are subalgebras of $\Der \n_{d,t}$, even more, $\Der_{\n_{d,t},\T} \n_{d,t}$ is an ideal inside $\Der_{\T} \n_{d,t}$, and the following result follows \cite[Proposition 5]{Sa-71}:

\begin{theorem}\label{Tder}
Let $\T$ be an ideal of $\n_{d,t}$ such that $\n_{d,t}^t \not \subseteq \T\subseteq \n_{d,t}^2$, the algebra of derivations of $\dfrac{\n_{d,t}}{\T}$ is isomorphic to $\dfrac{\Der_{\T} \n_{d,t}}{\Der_{\n_{d,t}, \T} \n_{d,t}}$, where $\Der_{\T} \n_{d,t}$ and $\Der_{\n_{d,t}, \T} {\n_{d,t}}$ maps $\T$ and $\n_{d,t}$ into $\T$ respectively.
\end{theorem}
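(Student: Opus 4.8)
The plan is to construct an explicit surjective homomorphism of Lie algebras $\Psi\colon\Der_{\T}\n_{d,t}\to\Der(\n_{d,t}/\T)$ with kernel $\Der_{\n_{d,t},\T}\n_{d,t}$, and then apply the first isomorphism theorem. Let $\pi\colon\n_{d,t}\to\n_{d,t}/\T$ be the canonical projection; by the Universal Mapping Property $\pi$ is the surjection with $x_i\mapsto\bar x_i:=\pi(x_i)$ and $\ker\pi=\T$, so $\n_{d,t}/\T$ is $t$-step nilpotent (because $\n_{d,t}^t\not\subseteq\T$) and generated by $\bar x_1,\dots,\bar x_d$. If $d\in\Der_{\T}\n_{d,t}$, then $d(\T)\subseteq\T$ forces $d$ to induce a well-defined linear map $\bar d$ on $\n_{d,t}/\T$ characterised by $\bar d\circ\pi=\pi\circ d$; a one-line bracket computation shows $\bar d\in\Der(\n_{d,t}/\T)$, and from $\overline{d_1d_2}\circ\pi=\pi\circ d_1d_2=\bar d_1\circ\pi\circ d_2=\bar d_1\bar d_2\circ\pi$ together with surjectivity of $\pi$ one gets $\overline{d_1d_2}=\bar d_1\bar d_2$, hence $\Psi(d)=\bar d$ is a Lie algebra homomorphism.

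The kernel is read off immediately: $\bar d=0$ means $\pi(d(x))=0$ for every $x\in\n_{d,t}$, that is $d(\n_{d,t})\subseteq\ker\pi=\T$, which is precisely the defining condition of $\Der_{\n_{d,t},\T}\n_{d,t}$; this subalgebra is an ideal of $\Der_{\T}\n_{d,t}$ since $d(\n_{d,t})\subseteq\T$ entails $d(\T)\subseteq\T$ as well, as already noted before the statement.

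The substantive point is the surjectivity of $\Psi$, and this is where Proposition~\ref{der-aut-free}(a) is used. Given $\delta\in\Der(\n_{d,t}/\T)$, choose for each $i$ an element $y_i\in\n_{d,t}$ with $\pi(y_i)=\delta(\bar x_i)$, let $\varphi\colon\mathfrak{u}\to\n_{d,t}$ be the linear map $x_i\mapsto y_i$, and let $d_\varphi\in\Der\n_{d,t}$ be its extension provided by Proposition~\ref{der-aut-free}(a). Now compare the two linear maps $f:=\pi\circ d_\varphi$ and $g:=\delta\circ\pi$ from $\n_{d,t}$ to $\n_{d,t}/\T$. Using that $\pi$ is a homomorphism, a direct check gives $f([x,y])=[f(x),\pi(y)]+[\pi(x),f(y)]$ and the same identity for $g$; consequently their difference $h=f-g$ satisfies $h([x,y])=[h(x),\pi(y)]+[\pi(x),h(y)]$, while by the choice of $y_i$ one has $h(x_i)=\pi(y_i)-\delta(\bar x_i)=0$. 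The set $\{z\in\n_{d,t}:h(z)=0\}$ is a subalgebra of $\n_{d,t}$ because the displayed identity closes on it, and it contains $x_1,\dots,x_d$, hence equals $\n_{d,t}$; therefore $\pi\circ d_\varphi=\delta\circ\pi$. Evaluating on $\T$ yields $\pi(d_\varphi(\T))=\delta(\pi(\T))=\delta(0)=0$, so $d_\varphi(\T)\subseteq\T$, i.e.\ $d_\varphi\in\Der_{\T}\n_{d,t}$; and $\pi\circ d_\varphi=\delta\circ\pi$ then says exactly $\Psi(d_\varphi)=\delta$.

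Putting these together, $\Der(\n_{d,t}/\T)=\Img\Psi\cong\Der_{\T}\n_{d,t}/\ker\Psi=\Der_{\T}\n_{d,t}/\Der_{\n_{d,t},\T}\n_{d,t}$. The only delicate step is the surjectivity argument: its content is that every derivation of the quotient lifts, through the free algebra via Proposition~\ref{der-aut-free}, to a derivation of $\n_{d,t}$, and that such a lift automatically stabilises $\T$; this automatic stabilisation, forced by the cocycle identity shared by $\pi\circ d_\varphi$ and $\delta\circ\pi$, is exactly what singles out $\Der_{\T}\n_{d,t}$ as the right domain for $\Psi$. Note that the hypotheses $\n_{d,t}^t\not\subseteq\T\subseteq\n_{d,t}^2$ are used only to keep $\n_{d,t}/\T$ within the class of $t$-step nilpotent algebras of type $d$; the argument itself goes through for any ideal $\T$.
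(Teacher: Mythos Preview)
Your proof is correct. The paper does not actually prove Theorem~\ref{Tder}; it simply attributes the result to \cite[Proposition~5]{Sa-71}. Your argument, however, mirrors exactly the strategy the paper does spell out for the parallel automorphism statement (Theorem~\ref{Taut}): define the induced map on the quotient, identify its kernel by inspection, and obtain surjectivity by lifting values on generators via Proposition~\ref{der-aut-free} and then checking that the lift intertwines with the projection. Your cocycle argument for the equality $\pi\circ d_\varphi=\delta\circ\pi$ is the derivation-side analogue of the paper's computation $\rho_\T\circ\Phi_A=\hat A\circ\rho_\T$ in the proof of Theorem~\ref{Taut}, and your observation that $d_\varphi(\T)\subseteq\T$ follows automatically from this intertwining is likewise parallel to the paper's remark that $\Phi_A(\T)=\T$ follows from the same equation there.
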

 
In a similar vein to the previous theorem, it is possible to arrive at a structural description of automorphisms of homomorphic images of free nilpotent algebras. For any ideal $\T$ of $\n_{d,t}$, $\n_{d,t}^t \not \subseteq \T\subseteq \n_{d,t}^2$, let denote by $\Aut_{\T} {\n_{d,t}}$ the subset of automorphisms which map $\T$ into itself. It is easily checked that $\Aut_{\T} {\n_{d,t}}$ is a subgroup of $\Aut {\n_{d,t}}$. Consider now the map:
\begin{equation*}
\theta\colon \Aut_{\T} {\n_{d,t}} \to \Aut \frac{\n_{d,t}}{\T}, \quad \theta(\Phi)(x+\T)=\Phi(x)+\T
\end{equation*}
By using $\Phi(\T)=\T$ and $\Phi$ homomorphism, we can easily check that $\theta$ is well defined.
Now, a straightforward computation shows that $\theta$ is a group homomorphism with kernel,
\begin{equation*}
\Ker \theta=\{\Phi\in\Aut \n_{d,t}: \Img(\Phi-Id)\subseteq \T\}.
\end{equation*}
Then, we have the following result:

\begin{theorem}\label{Taut}
For any ideal $\T$ of $\n_{d,t}$ such that $\n_{d,t}^t \not \subseteq \T\subseteq \n_{d,t}^2$, the set $\Aut_{\T}^\circ \n_{d,t}=\{\Phi\in\Aut \n_{d,t}: \Img(\Phi-Id)\subseteq \T\}$ is a normal subgroup of the group of automorphisms of $\dfrac{\n_{d,t}}{\T}$. Moreover $\Aut \dfrac{\n_{d,t}}{\T}$ is isomorphic to $\dfrac{\Aut_{\T} \n_{d,t}}{\Aut_{\T}^\circ \n_{d,t}}$, where $\Aut_{\T} \n_{d,t}$ maps $\T$ into $\T$.
\end{theorem}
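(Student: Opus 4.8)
The plan is to use the group homomorphism $\theta\colon\Aut_{\T}\n_{d,t}\to\Aut(\n_{d,t}/\T)$ already introduced before the statement, whose kernel has been identified with $\Aut_{\T}^\circ\n_{d,t}$. Once we know that $\Aut_{\T}^\circ\n_{d,t}$ really sits inside $\Aut_{\T}\n_{d,t}$ and that $\theta$ is surjective, both assertions of the theorem drop out of the first isomorphism theorem for groups.

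First I would check the inclusion $\Aut_{\T}^\circ\n_{d,t}\subseteq\Aut_{\T}\n_{d,t}$. If $\Img(\Phi-Id)\subseteq\T$ then for $z\in\T$ one has $\Phi(z)=z+(\Phi-Id)(z)\in\T+\T=\T$, so $\Phi(\T)\subseteq\T$, and equality follows by comparing dimensions. Hence $\Aut_{\T}^\circ\n_{d,t}$ is a subgroup of $\Aut_{\T}\n_{d,t}$, and being $\Ker\theta$ it is a normal subgroup of it.

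The heart of the proof is surjectivity of $\theta$. Let $\pi\colon\n_{d,t}\to\n_{d,t}/\T$ be the canonical projection and fix $\psi\in\Aut(\n_{d,t}/\T)$. Since $\T\subseteq\n_{d,t}^2$, the cosets $\pi(x_1),\dots,\pi(x_d)$ form a m.s.g. of $\n_{d,t}/\T$. For each $i$ pick $y_i\in\n_{d,t}$ with $\pi(y_i)=\psi(\pi(x_i))$ and let $\varphi\colon\mathfrak{u}\to\n_{d,t}$ be the linear map $\varphi(x_i)=y_i$; by Proposition~\ref{der-aut-free}(b) it extends to an algebra homomorphism $\Phi_\varphi$ of $\n_{d,t}$. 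Then I would verify three things. (1) $\Phi_\varphi$ is bijective: passing to $\n_{d,t}/\n_{d,t}^2$, which via the hypothesis $\T\subseteq\n_{d,t}^2$ is canonically identified with $\frac{\n_{d,t}/\T}{(\n_{d,t}/\T)^2}$, the linear map induced by $\Phi_\varphi$ becomes (a conjugate of) the one induced by $\psi$, hence is invertible; therefore $\{\p_{\mathfrak{u}}(y_1),\dots,\p_{\mathfrak{u}}(y_d)\}$ is a m.s.g. and Proposition~\ref{der-aut-free} gives $\Phi_\varphi\in\Aut\n_{d,t}$. (2) $\Phi_\varphi$ stabilizes $\T$: the algebra homomorphisms $\pi\circ\Phi_\varphi$ and $\psi\circ\pi$ from $\n_{d,t}$ agree on the generators $x_i$, hence coincide; thus $\pi(\Phi_\varphi(\T))=\psi(\pi(\T))=0$, so $\Phi_\varphi(\T)\subseteq\T$, with equality by dimension, i.e. $\Phi_\varphi\in\Aut_{\T}\n_{d,t}$. (3) $\theta(\Phi_\varphi)=\psi$: for $x\in\n_{d,t}$ we get $\theta(\Phi_\varphi)(x+\T)=\pi(\Phi_\varphi(x))=\psi(\pi(x))=\psi(x+\T)$.

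Finally, surjectivity of $\theta$ together with the first isomorphism theorem gives $\Aut(\n_{d,t}/\T)\cong\Aut_{\T}\n_{d,t}/\Ker\theta=\Aut_{\T}\n_{d,t}/\Aut_{\T}^\circ\n_{d,t}$, which is the assertion. The only genuinely delicate point is the lifting step (1)–(2): one has to make sure the homomorphism supplied by the universal mapping property is actually an \emph{automorphism} and that it really respects $\T$. Both of these are not computations but structural observations — the first is read off modulo $\n_{d,t}^2$ using Proposition~\ref{der-aut-free}, and the second follows from the identity $\pi\circ\Phi_\varphi=\psi\circ\pi$ of homomorphisms on generators.
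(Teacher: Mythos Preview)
Your proof is correct and follows essentially the same route as the paper: both reduce everything to surjectivity of $\theta$, lift the images of a minimal set of generators under a given $\psi\in\Aut(\n_{d,t}/\T)$ to elements of $\n_{d,t}$, extend via Proposition~\ref{der-aut-free}(b), and then verify the identity $\pi\circ\Phi_\varphi=\psi\circ\pi$ to deduce both $\Phi_\varphi(\T)=\T$ and $\theta(\Phi_\varphi)=\psi$. Your explicit check that $\Aut_{\T}^\circ\n_{d,t}\subseteq\Aut_{\T}\n_{d,t}$ and your argument for bijectivity via $\n_{d,t}/\n_{d,t}^2$ are slightly more detailed than the paper's treatment, but the strategy is the same.
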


\begin{proof}
From previous comments we only need to proof that the map $\theta$ is onto. Let $\rho_\mathfrak{t}: \n_{d,t}\to \frac{\n_{d,t}}{\T}$ be the canonical projection and let $\{f_1+\T, \ldots, f_k+\T\}$ be a basis of $\frac{\n_{d,t}}{\T}$ and $\{e_1 + \T,\ldots, e_d + \T\}$ a m.s.g. of $\frac{\n_{d,t}}{\T}$. Then $\{e_1, \ldots, e_d\}$ is also a m.s.g. of $\n_{d,t}$. If we take a generic automorphism $\hat{A}\in \Aut \dfrac{\n_{d,t}}{\T}$, 
\begin{equation*}
    \hat{A}(e_i + \T) = \sum_{j=1}^k \alpha_{ij} f_j + \T,
    \ \text{and}\ \text{declare} \  
    A(e_i) = \sum_{j=1}^k \alpha_{ij} f_j,
\end{equation*}
$A$ extends to a linear homomorphism, $A\colon\mathfrak{e}\to \n_{d,t}$, where $\mathfrak{e}=\spaned\langle e_1, \ldots, e_d\rangle$. Let $\Phi_A$ be the homomorphism given by Proposition \ref{der-aut-free}. We check that $\theta(\Phi_A) = \hat{A}$ noting that, for a generic element $[[\ldots[a_1,a_2],\ldots,a_l]$ where $a_i \in \mathfrak{e}$, up to linear combinations, we have that
\begin{equation*}
\begin{split}
    \rho_{\T} \circ \Phi_A [[\ldots[a_1,a_2],\ldots,a_l] &= [[\ldots[\rho_{\T} \circ A(a_1),\rho_{\T} \circ A(a_2)],\ldots,\rho_{\T} \circ A(a_l)] \\&= [[\ldots[\hat{A}\circ \rho(a_1),\hat{A}\circ \rho_{\T}(a_2)],\ldots,\hat{A}\circ \rho_{\T}(a_l)] \\&= \hat{A}\circ \rho_{\T}[[\ldots[a_1,a_2],\ldots,a_l].
\end{split}
\end{equation*}
The second equality follows because for every $a_i = \sum_{j=1}^d\beta_{ji}e_j$,
\begin{equation*}
\begin{split}
    \rho_{\T}& \circ A(a_i) = \rho_{\T}\circ A \left(\sum_{j=1}^d\beta_{ji}e_j\right) = \sum_{j=1}^d\beta_{ji}\rho_{\T}\circ A(e_j) = \sum_{j=1}^d\beta_{ji}\rho_{\T}\left( \sum_{l=1}^k\alpha_{jl} f_l\right)
    \\&=\sum_{j=1}^d\beta_{ji} \sum_{l=1}^k(\alpha_{jl} f_l+\T) = \sum_{j=1}^d\beta_{ji} \hat{A}(e_i + \T) = \sum_{j=1}^d\beta_{ji} \hat{A}\circ \rho_{\T}(e_i) = \hat{A}\circ\rho_{\T}(a_i).
\end{split}
\end{equation*}
Now $\Ker\, \rho_{\T}=\T$ and $\rho_{\T} \circ \Phi_A=\hat{A} \circ \rho_{\T}$ implies $\Phi_A(\T)=\T$ and, $\Phi_A$ automorphism, follows by using the equivalence given in Proposition \ref{der-aut-free} and the fact that $\hat{A}$ is an  automorphism.
\end{proof}

\section{Examples, techniques and patterns}
\label{sec:3}

 From the generator set $U=\{x_1,\dots,x_d\}$, we easily get the standard monomials $[x_{i_1},\dots,x_{i_r}]$ that (linearly) generate the Lie algebra $\n_{d,t}$. However, the anticommutativity law ($[x_i,x_j]+[x_j,x_i]=0$) and the Jacobi identity ($\sum_{\text{cyclic}}[[x_i,x_j],x_k]=0$), both set linear dependence relations. This makes it difficult to find a basis formed by monomials. The problem was solved by M. Hall in 1950. Focusing on the behavior of algorithms, the most natural basis to work on free nilpotent Lie algebras, is the \emph{Hall basis} (see~\cite{Ha-50} for definition, and \cite[Chapter~IV, Section 5]{Se-92} for a detailed construction).

Starting with the total order $x_d<x_{d-1}<\dots<x_1$, the definition of Hall basis states recursively if a given standard monomial depends on the previous ones. The recursive algorithm is covered by the pseudocode given in Table~\ref{tab:1} and provides a Hall basis that we will denote as $H_{d,t}(U_{<})$ or $H_{d,t}$ if the total order in $U$ is clear. This algorithm checks if an element $v$ belongs to the Hall basis once we have defined a monomial order. For some small $d$ and $t$ values, the output of Hall basis algorithm is given in Table~\ref{tab:2}.

\begin{table}
\caption{{\bf:}$\quad$ Hall basis algorithm}
\label{tab:1} 
\begin{tabular}{p{0.992\linewidth}}
\noalign{\smallskip}\hline\noalign{\smallskip}
\textit{isCanonical}($v$):
	    
	    \setlength{\parindent}{15pt}
	    \textbf{if} $\deg v == 1$ \textbf{ then true};
	    
	    \textbf{else if} (\textbf{not }\textit{isCanonical}($v_1$)\textbf{ or }\textbf{not  }\textit{isCanonical}($v_2$)\textbf{ or }$v_2 > v_1$)\textbf{ then false};
	    
	    \textbf{else if} $\deg v_1 > 1$\textbf{ then }(\textit{isCanonical}($v_{1,1}$)\textbf{ or }\textit{isCanonical}($v_{1,2}$)\textbf{ or }$v_2 \geq v_{1,2}$);
	    
	    \textbf{else true};\\
\noalign{\smallskip}\hline\noalign{\smallskip}
\end{tabular}
$^a$ Note that this is a recursive algorithm. Here $v = [v_1, v_2]$. In order to generate Hall Basis elements of degree $n$ we can combine $v_1$ and $v_2$ in level $n-k$ and $k$ respectively, where $k = 1, \ldots, n/2$.
\end{table}
\begin{table}
\caption{{\bf:}$\quad$ Hall basis of $\n_{d,t}$}
\label{tab:2} 
\fontsize{8.2pt}{10pt}\selectfont
\begin{tabular}{p{0.67cm}p{10.83cm}}
\hline\noalign{\smallskip}
$(d,t)$ & $H_{d,t}$ \\
$(2, 6)$&$x_2, x_1, [x_1, x_2], [[x_1, x_2], x_2], [[x_1, x_2], x_1], [[[x_1, x_2], x_2], x_2], [[[x_1, x_2], x_2], x_1],$\\
&$ [[[x_1, x_2], x_1], x_1], [[[[x_1, x_2], x_2], x_2], x_2], [[[[x_1, x_2], x_2], x_2], x_1], $\\
&$ [[[x_1, x_2], x_2], [x_1, x_2]],[[[[x_1, x_2], x_2], x_1], x_1], [[[x_1, x_2], x_1], [x_1, x_2]],$\\
&$ [[[[x_1, x_2], x_1], x_1], x_1],[[[[[x_1, x_2], x_2], x_2], x_2], x_2], [[[[[x_1, x_2], x_2], x_2], x_2], x_1], $\\
&$ [[[[x_1, x_2], x_2], x_2], [x_1, x_2]],[[[[[x_1, x_2], x_2], x_2], x_1], x_1], [[[[x_1, x_2], x_2], x_1], [x_1, x_2]],$\\
&$ [[[[[x_1, x_2], x_2], x_1], x_1], x_1],[[[x_1, x_2], x_1], [[x_1, x_2], x_2]], [[[[x_1, x_2], x_1], x_1], [x_1, x_2]], $\\
&$ [[[[[x_1, x_2], x_1], x_1], x_1], x_1] $\\
\noalign{\smallskip}
$(4, 3)$&$x_4, x_3, x_2, x_1, [x_3, x_4], [x_2, x_4], [x_2, x_3], [x_1, x_4], [x_1, x_3], [x_1, x_2], [[x_3, x_4], x_4],$\\
& $[[x_3, x_4], x_3], [[x_3, x_4], x_2], [[x_3, x_4], x_1], [[x_2, x_4], x_4], [[x_2, x_4], x_3], [[x_2, x_4], x_2],$\\
& $ [[x_2, x_4], x_1], [[x_2, x_3], x_3], [[x_2, x_3], x_2], [[x_2, x_3], x_1], [[x_1, x_4], x_4], [[x_1, x_4], x_3],$\\
&$[[x_1, x_4], x_2], [[x_1, x_4], x_1], [[x_1, x_3], x_3], [[x_1, x_3], x_2], [[x_1, x_3], x_1], [[x_1, x_2], x_2], $\\
& $[[x_1, x_2], x_1] $\\
\noalign{\smallskip}
\!$(6, 2)$&$x_6, x_5, x_4, x_3, x_2, x_1, [x_5, x_6], [x_4, x_6], [x_4, x_5], [x_3, x_6], [x_3, x_5], [x_3, x_4], [x_2, x_6],$\\
& $[x_2, x_5], [x_2, x_4], [x_2, x_3], [x_1, x_6], [x_1, x_5], [x_1, x_4], [x_1, x_3], [x_1, x_2] $\\
\noalign{\smallskip}
\noalign{\smallskip}\hline\noalign{\smallskip}
\end{tabular}
\footnotesize
$^b$ We point out that from expanded basis $H_{4,3}$ and $H_{2,6}$ we can recover Hall basis of $\n_{4,2}$ and $\n_{2,t}$ for $t=2,3,4,5$.
\end{table}

\noindent Now we introduce several examples which ilustrate (among other things):
\begin{enumerate}
\item{The way to describe a generic $d$-generated t-nilpotent Lie algebra as an homomorphic image of $\n_{d,t}$.}
\item{The way to compute automorphisms and derivations regarding Proposition~1 and Theorems 1 and 2.}
\item{The recognition of some structural patterns of nilpotent algebras depending on the nature of their derivations and automorphisms.}
\end{enumerate}
In the sequel, if a map $\varphi$ is given in a matrix form $A=(a_{ij})$ attached to a basis $\mathcal{B}=\{v_1,\dots,v_n\}$, then $\varphi(v_i)=\sum_{j=1}^n a_{ji}v_j$.

The Universal Mapping Property lets us describe any $t$-nilpotent Lie algebra $\n$ of type $d$ as a homomorphic image of $\n_{d,t}$ in a easy way. From any m.s.g. $\{e_1,\dots,e_d\}$ of $\n$, the correspondence $x_i\mapsto e_i$ for $i=1,\dots, d$ extends uniquely to a surjective algebra homomorphism $\theta_{\n}\colon\n_{d,t}\to \n$ and $\n\cong \dfrac{\n_{d,t}}{\Ker \theta_{\n}}$. We will compute ideals of this type in our following example.

\begin{example}\label{ex:dix-heis} Let $\n_1$ and $\n_2$ be the $8$-dimensional and $5$-dimensional Lie algebras described through the basis $\{e_1,\dots,e_8\}$ and $\{u_1,\dots,u_5\}$ by the following multiplication table ($[a,b]=-[b,a]$ and $[a,b]=0$ is not in the table):
\vspace{-0.5in}
\begin{multicols}{3}
\begin{align*}
    [e_{1},e_{2}] &= e_{5},\\
    [e_{1},e_{3}] &= e_{6},\\
    [e_1,e_4]     &= e_{7},\\
    [e_1,e_5]     &= -e_{8},
\end{align*}

\begin{align*}
    [e_2,e_3]     &= e_{8},\\
    [e_{2},e_{4}] &= e_{6},\\
    [e_{2},e_{6}] &= -e_{7},\\
    [e_3,e_4]     &= -e_{5},
\end{align*}

\begin{align*}
    [e_3,e_5]     &= -e_{7},\\
    [e_4,e_6]     &= -e_{8},\\
    [u_1,u_3]     &= u_5,\\
    [u_2,u_4]     &= u_5,
\end{align*}
\end{multicols}
\noindent The lower central series of these algebras are:
\begin{equation*}
\n_1^2=\spaned\langle e_5,e_6,e_7,e_8\rangle,\quad \n_1^3=\spaned\langle e_7,e_8\rangle,\quad \n_1^4=0,
\end{equation*}
and
\begin{equation*}
\n_2^2=\spaned\langle u_5\rangle,\quad \n_2^3=0.
\end{equation*}
Consider now the maps $\theta_{\n_1}\colon x_i\to e_i$ for $i=1,\dots,4$ from $\n_{4,3}$ onto $\n_1$ and $\theta_{\n_2}\colon x_i\to u_i$ for $i=1,\dots, 4$ from $\n_{4,2}$ onto $\n_2$. Both correspondances extend to homomorphisms of algebras as in the proof in \cite[Proposition 4]{Sa-71} ($\theta[x_{\alpha_1}\dots x_{\alpha_s}]=[\theta(x_{\alpha_1})\dots \theta(x_{\alpha_s})]$). It is not hard to see that:

\begin{equation*}
\begin{split}
\Ker \theta_{\n_1}=\spaned\langle [x_3, x_4]+[x_1, x_2], [x_2, x_4]-[x_1, x_3], [x_2, x_3]-[[x_1, x_3], x_1],&\\
[x_1, x_4]+[[x_1, x_2], x_2], [[x_3, x_4], x_4]-[[x_1, x_3], x_1], [[x_3, x_4], x_3],&\\
[[x_3, x_4], x_2]+[[x_1, x_2], x_2],[[x_3, x_4], x_1], [[x_2, x_4], x_4], [[x_1, x_4], x_2],&\\
[[x_2, x_4], x_3]+[[x_1, x_2], x_2], [[x_2, x_4], x_2], [[x_2, x_4], x_1]-[[x_1, x_3], x_1],&\\
[[x_2, x_3], x_3], [[x_2, x_3], x_2], [[x_2, x_3], x_1], [[x_1, x_4], x_4],[[x_1, x_4], x_3],&\\
[[x_1, x_4], x_1],[[x_1, x_3], x_3]+[[x_1, x_2], x_2],[[x_1, x_3], x_2], [[x_1, x_2], x_1]\rangle,&
\end{split}
\end{equation*}
and
\begin{equation*}
\Ker \theta_{\n_2}=\spaned\langle [x_3, x_4],[x_2, x_3],[x_1, x_4],[x_1, x_2],[x_1, x_3]-[x_2, x_4]\rangle .
\end{equation*}
We point out that $\Ker \theta_{\n_2}$ is an homogeneous ideal in the $\mathbb{N}$-graded structure of $\n_{4,2}$ and $\Ker \theta_{\n_1}$ is not an homogeneous ideal  of~$\n_{4,3}$. Therefore, $\n_2$ inherits the grading of $\n_{4,2}$, but $\n_1$ does not inherit that of $\n_{4,3}$.
\end{example} 

\begin{example}\label{ex:n24} According to Proposition \ref{der-aut-free}, derivations and automorphisms of $\n_{2,4}$ in Hall basis $H_{2,4}$ can be easily obtained by iterating the Leibniz rule $\varphi([a,b])=[\varphi(a),b])]+[a,\varphi(b)]$ and the law $\varphi([a,b])=[\varphi(a),\varphi(b)]$. The matrices representing the elements of $\mathrm{Aut} \n_{2,4}=\mathtt{GL}(2,4)\rtimes \mathtt{NL}(2,4)$ are product of matrices of the following shapes:
\begin{equation*}
{\scriptsize \left (\begin{array} {c|c|c|c} \begin{array}{cc}a_1&a_2\\a_3&a_4\end{array} & 0 & 0 & 0\\ \hline 0& \epsilon & 0 & 0 \\ \hline 0& 0& \begin{array}{cc}\epsilon a_1&\epsilon a_2\\\epsilon a_3&\epsilon a_4\end{array}& 0 \\ \hline 0& 0 & 0&\epsilon\cdot A'\end{array} \right )\in \mathtt{GL}(2,4),
\left( \begin{array} {c|c|c|c} I_2 & 0 & 0 & 0\\ \hline \begin{array}{cc}b_1&b_2\end{array}& 1 & 0 & 0 \\ \hline \begin{array}{cc}c_1&c_2\\c_3&c_4\end{array}& \begin{array}{c}b_2\\-b_1\end{array}& I_2 & 0 \\ \hline \begin{array}{cc}d_1&d_2\\d_3&d_4\\d_5&d_6\end{array}& \begin{array}{c}c_2\\c_4-c_1\\-c_3\end{array} & \begin{array}{cc}b_2&0\\-b_1&b_2\\0&-b_1\end{array}& I_3\end{array} \right )\in \mathtt{NL}(2,4); 
}\end{equation*}
here $I_k$ denotes the $k\times k$ identity matrix, $\epsilon=a_1a_4-a_2a_3\neq 0$ and
$${\scriptsize A'=\left( \begin{array}{ccc}a_1^2&a_1a_2&a_2^2\\2a_1a_3&a_1a_4+a_2a_3&2a_2a_4\\a_3^2&a_3a_4&a_4^2\end{array}\right ).}
$$From the decomposition $\Der \n_{2,4}=\mathfrak{S}_{2,4}\oplus \mathbb{F}\cdot id_{2,4}\oplus \mathfrak{N}_{2,4}$, the matrices that represent derivations of $\n_{2,4}$ are sum of matrices of three different types:
\begin{equation*}
{\scriptsize \left (\begin{array} {c|c|c|c} \begin{array}{cc}a_1&a_2\\a_3&-a_1\end{array} & 0 & 0 & 0\\ \hline 0& 0 & 0 & 0 \\ \hline 0& 0& \begin{array}{cc}a_1&a_2\\a_3&-a_1\end{array}& 0 \\ \hline 0& 0 & 0&\begin{array}{ccc}2a_1&a_2&0\\2a_3&0&2a_2\\0&a_3&-2a_1\end{array}\end{array} \right )\in \mathfrak{S}_{2,4},\  
\lambda id_{2,4}=\left( \begin{array} {c|c|c|c} \begin{array}{cc}\lambda&0\\0&\lambda\end{array} & 0 & 0 & 0\\ \hline 0& 2\lambda & 0 & 0 \\ \hline 0& 0&\begin{array}{cc}3\lambda&0\\0&3\lambda\end{array} & 0 \\ \hline 0& 0 & 0& \begin{array}{ccc}4\lambda&0&0\\0&4\lambda&0\\0&0&4\lambda\end{array}\end{array} \right )}\end{equation*}
and 
\begin{equation*}
{\scriptsize \left( \begin{array} {c|c|c|c} 0 & 0 & 0 & 0\\ \hline \begin{array}{cc}b_1&b_2\end{array}& 0 & 0 & 0 \\ \hline \begin{array}{cc}c_1&c_2\\c_3&c_4\end{array}& \begin{array}{c}b_2\\-b_1\end{array}& 0 & 0 \\ \hline \begin{array}{cc}d_1&d_2\\d_3&d_4\\d_5&d_6\end{array}& \begin{array}{c}c_2\\c_4-c_1\\-c_3\end{array} & \begin{array}{cc}b_2&0\\-b_1&b_2\\0&-b_1\end{array}& 0\end{array} \right )\in \mathfrak{N}_{2,4}.}\end{equation*}

For any $0\neq \lambda \in \mathbb{F}$, the linear map $\varphi_\lambda (x_i)=\lambda x_i$ provides the (semisimple) automorphism $\Phi_{\varphi_\lambda} ([x_{\alpha_1}\dots x_{\alpha_r}])=\lambda^r[x_{\alpha_1}\dots x_{\alpha_r}]$ and the (semisimple) derivation $d_{\varphi_{\lambda}}([x_{\alpha_1}\dots x_{\alpha_r}])=r\lambda[x_{\alpha_1}\dots x_{\alpha_r}]$.

\end{example}

Consider now the $5$-dimensional Lie algebra $\n_3$ with basis $\{z_1,\dots,z_5\}$ and nonzero products: 
\begin{equation*}
[z_1,z_2]=z_3,\quad [z_1,z_3]=z_4,\quad [z_1,z_4]=[z_2,z_3]=z_5.
\end{equation*}
The lower central series is $\n_3^2=\spaned\langle z_3,z_4,z_5\rangle$, $\n_3^3=\spaned\langle z_4,z_5\rangle$, $\n_3^4=\spaned\langle z_5\rangle $ and $\n_3^5=0$. So, the correspondence $x_i\mapsto z_i$ for $i=1,2$ extends to a surjective algebra homomorphism $\theta_{\n_3}\colon\n_{2,4}\to \n_3$ and $\n_3\cong \dfrac{\n_{2,4}}{\Ker \theta_{\n_3}}$. In this case, the kernel is the $3$-dimensional ideal:
\begin{multline*}
\Ker \theta_{\n_3}=\spaned\langle [[[x_1, x_2], x_2], x_2], [[[x_1, x_2], x_2], x_1],\\
[[x_1, x_2], x_2]+[[[x_1, x_2], x_1], x_1]\rangle.
\end{multline*}

\begin{example}\label{ex:John}
Let denote $\mathfrak{t}=\Ker \theta_{\n_3}$. According to Theorems \ref{Tder} and \ref{Taut}, derivations (automophisms) of $\n_3$ are a quotient of the set of derivations (automorphisms) of $\n_{2,4}$ that leave $\mathfrak{t}$ invariant. These sets are:

\begin{equation*}
{\scriptsize \Der_\mathfrak{t} \mathfrak{n}_{2,4}: \left(
\def\arraystretch{1.25}
\begin{array} {c|c|c|c} \begin{array}{cc}a_1&a_2\\0&\frac{1}{2}a_1\end{array} & 0 & 0 & 0\\ \hline \begin{array}{cc}b_1&b_2\end{array}& \frac{3}{2}a_1 & 0 & 0 \\ \hline \begin{array}{cc}c_1&c_2\\c_3&c_4\end{array}& \begin{array}{c}b_2\\-b_1\end{array}& \begin{array}{cc}\frac{5}{2}a_1&a_2\\0&2a_1\end{array}& 0 \\ \hline \begin{array}{cc}d_1&d_2\\d_3&d_4\\d_5&d_6\end{array}& \begin{array}{c}c_2\\c_4-c_1\\-c_3\end{array} & \begin{array}{cc}b_2&0\\-b_1&b_2\\0&-b_1\end{array}&\begin{array}{ccc}\frac{7}{2}a_1&a_2&0\\0&3a_1&2a_2\\0&0&\frac{5}{2}a_1\end{array}\end{array} \right ),}
\end{equation*}
and, for $a_4\neq 0$,
\begin{equation*}
{\scriptsize \Aut_\mathfrak{t} \mathfrak{n}_{2,4}: 
\left(
\def\arraystretch{1.3}
\begin{array}{cc|c|cc|ccc}
 a_4^2 & a_2 & 0 & 0 & 0 & 0 & 0 & 0 \\
 0 & a_4 & 0 & 0 & 0 & 0 & 0 & 0 \\\hline
 b_1 & b_2 & a_4^3 & 0 & 0 & 0 & 0 & 0 \\\hline
 c_1 & c_2 & a_4^2 b_2-a_2 b_1 & a_4^5 & a_2 a_4^3 & 0 & 0 & 0 \\
 c_3 & c_4 & -a_4 b_1 & 0 & a_4^4 & 0 & 0 & 0 \\\hline
 d_1 & d_2 & a_4^2 c_2-a_2 c_1 & a_4^4 b_2-a_2 a_4^2 b_1 & a_2 \left(a_4^2 b_2-a_2 b_1\right) & a_4^7 & a_2 a_4^5 & a_2^2 a_4^3 \\
 d_3 & d_4 & c_4 a_4^2-a_4 c_1-a_2 c_3 & -a_4^3 b_1 & a_4^3 b_2-2 a_2 a_4 b_1 & 0 & a_4^6 & 2 a_2 a_4^4 \\
 d_5 & d_6 & -a_4 c_3 & 0 & -a_4^2 b_1 & 0 & 0 & a_4^5
\end{array}
\right).
}
\end{equation*}

Note that the isomorphism $\frac{\mathfrak{n}_{2,4}}{\mathfrak{t}}\to\n_3$ is provided by the correspondence $z_i'\mapsto z_i$ by taking $z_1'=x_1+\mathfrak{t}$, $z_2'=x_2+\mathfrak{t}$, $z_3'=[x_1,x_2]+\mathfrak{t}$, $z_4'=[x_1,[x_1,x_2]]+\mathfrak{t}$, $z_5'=[x_2,[x_1,x_2]]+\mathfrak{t}$. So $\mathcal{B'}=\{z_1', z_2', z_3',z_4', z_5'\}$ is a basis. Now, by using the isomorphisms in Theorem \ref{Tder} and Theorem \ref{Taut} and a minor change of basis, we get a complete description of derivations and automorphisms of $\n_3\cong\dfrac{\mathfrak{n}_{2,4}}{\mathfrak{t}}$. Relative to the basis $\{z_1,z_2,z_3,z_4,z_5\}$:
\begin{equation*}
{\scriptsize \Der \frac{\mathfrak{n}_{2,4}}{\mathfrak{t}}: \left (\begin{array} {c|c|c|c} \begin{array}{cc}a_1&0\\a_3&2a_1\end{array} & 0 & 0 & 0\\ \hline \begin{array}{cc}b_1&b_2\end{array}& 3a_1 & 0 & 0 \\ \hline \begin{array}{cc}c_1&c_2\end{array}& \begin{array}{c}b_2\end{array}& 4a_1& 0 \\ \hline \begin{array}{cc}d_1&d_2\end{array}&c_2-b_1& a_3+b_2& 5a_1\end{array} \right ),}
\end{equation*}
and, for $a_4\neq 0$,
\begin{equation*}
{\scriptsize \Aut \frac{\mathfrak{n}_{2,4}}{\mathfrak{t}}:\left(
\def\arraystretch{1.3}
\begin{array}{cc|c|c|c}
 a_4 & 0 & 0 & 0 & 0 \\
 a_2 & a_4^2 & 0 & 0 & 0 \\\hline
 b_2 & b_1 & a_4^3 & 0 & 0 \\\hline
 -c_4 & -c_3 & a_4 b_1 & a_4^4 & 0 \\\hline
 d_6-c_2 & d_5-c_1 & a_2 b_1-a_4 (a_4 b_2+c_3) & a_4^2 (a_2 a_4+b_1) & a_4^5
\end{array}
\right).}
\end{equation*}

From previous descriptions, it is clear that the map $\varphi_\lambda\colon x_i\to \lambda x_i$, for $i=1,2$, extends to a derivation iff $\lambda=0$ and $\varphi_\lambda$ extends to an automorphism iff $\lambda=1$. We also remark that, $\mathfrak{t}$ is not an homogeneous ideal, so $\n_3$ does not inherit the natural $\mathbb{N}$-grading of $\n_{2,4}$. However $\Phi_\lambda\colon x_i\to \lambda^ix_i$ is an automorphism for all $0\neq \lambda \in \mathbb{F}$ with eigenvalues $\lambda^i$ for $1\leq i\leq 5$. In the case of $\mathbb{F}$ be the reals and $\lambda>1$, $\Phi_\lambda$ is an (expanding) automorphism that provides the $\mathbb{N}$-grading $\n_3=\oplus_{i=1}^5 S(\lambda^i)$ where $S(\lambda^i)=\{v\in\n_3:  \Phi_\lambda(v)=\lambda^iv\}$.
\end{example}

As in the previous example, in the final one we get the conditions that determine derivations and automorphisms of $\n_2$, the Lie algebra described in Example \ref{ex:dix-heis}, by using $\Der \n_{4,2}$ and $\Aut_{4,2}$.

\begin{example}\label{Heisenberg} 
Let now $\mathfrak{t}=\Ker \theta_{\n_2}$. Derivations and automorphisms of $\n_{4,2}$ in Hall basis $H_{4,2}$ are (here $\Delta_{i,j}^{k,l}=a_ia_j-a_ka_l$):
\begin{equation*}
\arraycolsep=1.2pt
\Der \n_{4,2}: {\scriptsize d_A\!=\!\left(
\begin{array}{cccc|cccccc}
 a_{1} & a_{2} & a_{3} & a_{4} & 0 & 0 & 0 & 0 & 0 & 0 \\
 a_{5} & a_{6} & a_{7} & a_{8} & 0 & 0 & 0 & 0 & 0 & 0 \\
 a_{9} & a_{10} & a_{11} & a_{12} & 0 & 0 & 0 & 0 & 0 & 0 \\
 a_{13} & a_{14} & a_{15} & a_{16} & 0 & 0 & 0 & 0 & 0 & 0 \\\hline
 b_{1} & b_{2} & b_{3} & b_{4} & a_{1}+a_{6} & a_{7} & -a_{3} & a_{8} & -a_{4} & 0 \\
 b_{5} & b_{6} & b_{7} & b_{8} & a_{10} & a_{1}+a_{11} & a_{2} & a_{12} & 0 & -a_{4} \\
 b_{9} & b_{10} & b_{11} & b_{12} & -a_{9} & a_{5} & a_{6}+a_{11} & 0 & a_{12} & -a_{8} \\
 b_{13} & b_{14} & b_{15} & b_{16} & a_{14} & a_{15} & 0 & a_{1}+a_{16} & a_{2} & a_{3} \\
 b_{17} & b_{18} & b_{19} & b_{20} & -a_{13} & 0 & a_{15} & a_{5} & a_{6}+a_{16} & a_{7} \\
 b_{21} & b_{22} & b_{23} & b_{24} & 0 & -a_{13} & -a_{14} & a_{9} & a_{10} & a_{11}+a_{16} \\
\end{array}
\right),
}\end{equation*}
and, for nonsingular matrices with entries $a_i$,
\begin{equation*}
\def\arraystretch{1.5}
\Aut \n_{4,2}: {\scriptsize \Phi_A\!=\!\left(
\begin{array}{cccc|cccccc}
 a_{1} & a_{2} & a_{3} & a_{4} & 0 & 0 & 0 & 0 & 0 & 0 \\
 a_{5} & a_{6} & a_{7} & a_{8} & 0 & 0 & 0 & 0 & 0 & 0 \\
 a_{9} & a_{10} & a_{11} & a_{12} & 0 & 0 & 0 & 0 & 0 & 0 \\
 a_{13} & a_{14} & a_{15} & a_{16} & 0 & 0 & 0 & 0 & 0 & 0 \\\hline
 b_{1} & b_{2} & b_{3} & b_{4} & \Delta_{1,6}^{2,5} &\Delta_{1,7}^{3,5} & \Delta_{2,7}^{3,6} & \Delta_{1,8}^{4,5} & \Delta_{2,8}^{4,6} & \Delta_{3,8}^{4,7} \\
 b_{5} & b_{6} & b_{7} & b_{8} & \Delta_{1,10}^{2,9} &\Delta_{1,11}^{3,9} & \Delta_{2,11}^{3,10} & \Delta_{1,12}^{4,9} & \Delta_{2,12}^{4,10} & \Delta_{3,12}^{4,11} \\
 b_{9} & b_{10} & b_{11} & b_{12} & \Delta_{5,10}^{6,9} &\Delta_{5,11}^{7,9} & \Delta_{6,11}^{7,10} & \Delta_{5,12}^{8,9} & \Delta_{6,12}^{8,10} & \Delta_{7,12}^{8,11} \\
 b_{13} & b_{14} & b_{15} & b_{16} & \Delta_{1,14}^{2,13} & \Delta_{1,15}^{3,13} & \Delta_{2,15}^{3,14} & \Delta_{1,16}^{4,13} & \Delta_{2,16}^{4,14} & \Delta_{3,16}^{4,15} \\
 b_{17} & b_{18} & b_{19} & b_{20} & \Delta_{5,14}^{6,13} & \Delta_{5,15}^{7,13} & \Delta_{6,15}^{7,14} & \Delta_{5,16}^{8,13} & \Delta_{6,16}^{8,14} & \Delta_{7,16}^{8,15} \\
 b_{21} & b_{22} & b_{23} & b_{24} & \Delta_{9,14}^{10,13} & \Delta_{9,15}^{11,13} & \Delta_{10,15}^{11,14} & \Delta_{9,16}^{12,13} & \Delta_{10,16}^{12,14} & \Delta_{11,16}^{12,15} \\
\end{array}
\right).
}
\end{equation*}

\noindent Let $d_A\in \Der \n_{4,2}$ be and $\Phi_A\in \Aut \n_{4,2}$. An easy computation shows that
\begin{equation*}
d_A\in \Der_{\mathfrak{t}} \n_{4,2}\ \mathrm{iff}\ \left\{ 
\begin{array}{ll}
a_{12}= -a_{5},&a_{13}= a_{10},\ a_{7}= a_{4},\\
a_{15}= -a_{2},&a_{16}= a_{1}-a_{6}+a_{11},
\end{array}\right.
\end{equation*}
and
\begin{equation*}
\def\arraystretch{1.5}
\Phi_A\in \Aut_{\mathfrak{t}} \n_{4,2}\ \mathrm{iff}\ \left\{
\begin{array}{l}
\Delta_{1,10}^{2,9}+ \Delta_{5,14}^{6,13}=\Delta_{2,11}^{3,10}+\Delta_{6,15}^{7,14}=0,\\
\Delta_{1,12}^{4,9}+\Delta_{5,16}^{8,13}=\Delta_{3,12}^{4,11}+\Delta_{7,16}^{8,15}=0,\\
\Delta_{1,11}^{3,9}+\Delta_{2,12}^{4,10}+\Delta_{5,15}^{7,13}+\Delta_{6,16}^{8,14}=0.
\end{array}
\right.
\end{equation*}
Therefore, the correspondance $u_i\mapsto \lambda u_i$ for $i=1,\dots, 4$, and $u_5\mapsto 2\lambda u_5$ extends by linearity to a derivation of $\n_2$ for all $\lambda$. The correspondence $u_i\mapsto \lambda u_i$ for $i=1,\dots, 4$, and $u_5\mapsto \lambda^2 u_5$ extends to an automorphism if $\lambda\neq 0$.
\end{example}

\subsection*{\emph{Epilogue }}

In 1955, N. Jacobson proved in \cite[Theorem 3]{Ja-55} that any Lie algebra of characteristic zero with a nonsingular derivation is nilpotent. The author also noted that the vality of the converse was an open question. Two years later, J. Dixmier and W.G. Lister supplied in \cite{DiLis-57} a negative answer to the question by means of the algebra $\n_1$ that we have revisited in Example \ref{ex:dix-heis}. Every derivation of $\n_1$ is nilpotent, so the elements of $\Der \n_1$ are nilpotent maps, and therefore, $\Der \n_1$ is a nilpotent Lie algebra. It can be also proved that $\Aut \n_1$ is not a nilpotent group (see \cite{LeLu-72}). The existence of $\n_1$ is the starting point of the study of the so called \emph{characteristically nilpotent Lie algebras}, that is, Lie algebras in which any derivation is nilpotent. Over fields of characteristic zero, this class of algebras matches to the class of algebras in which every semisimple automorphism is of finite order (see \cite[Theorem~3]{LeTo-59}) or the class of algebras in which the algebra of derivations is nilpotent (see \cite[Theorem~1]{LeTo-59}).

\emph{Quasi-cyclic Lie algebras} were introduced at \cite{Le-63} by G. Leger in 1963. A nilpotent Lie algebra $\n$ is called quasi-cyclic (also known in the literature as homogenous) if $\n$ has a subspace $\mathfrak{u}$ such that $\n$ decomposes as the direct sum of subspaces $\mathfrak{u}^k=[\mathfrak{u}^k,\mathfrak{u}]$; in particular, quasi-cyclic algebras are $\mathbb{N}$-graded. Free nilpotent Lie algebras $\n_{d,t}$ are examples of this type of algebras. It is not hard to see that a nilpotent Lie algebra $\n\cong \frac{\n_{d,t}}{\mathfrak{t}}$ is quasi-cyclic iff $\mathfrak{t}$ is a homogeneous ideal of $\n_{d,t}$. In fact, quasi-cyclic Lie algebras are the class of nilpotent Lie algebras that contain a m.s.g. $\{e_1,\dots, e_d\}$ such that the correspondence $e_i\mapsto e_i$ extends to a derivation of $\n$ according to \cite[Corollary 1]{Jo-75}. By reviewing $\Ker \theta_{\n_i}$, we conclude that $\n_2$ is quasi-cyclic, but $\n_1$ and $\n_3$ are not. 

An automorphism of a real Lie algebra is called \emph{expanding authomorphism} if it is a semisimple automorphism whose eigenvalues are all greater than $1$ in absolute value. In 1970, J. L. Dyer states in \cite{Dy-70} that quasi-cyclic Lie algebras admits expanding automorphisms, the converse is false. The Lie algebra $\n_3$ provides a counterexample: according to Example \ref{ex:John}, $\n_3$ admits expanding automorphisms, but it is not quasi-cyclic. The algebra $\n_3$ has been introduced in \cite{Le-63} and \cite{Jo-75} to illustrate the results therein. The latter paper includes the characterization of (real) quasi-cyclic Lie algebras as those algebras that admit \emph{grading automorphisms}.

As it is noted in 1974 by J. Scheuneman in \cite[Section 1]{Sch-74}, \emph{``\emph{($\dots$)} the Lie algebra of a simply transitive group of affine motions of $\mathbb{R}^n$ has an affine structure \emph{($\dots$)}"}. The main result in this paper is that each $3$-step nilpotent Lie algebra has an complete (also known as transitive) \emph{affine structure}. Therefore, any homomorphic image $\frac{\n_{d,3}}{\mathfrak{t}}$ can be endowed with a such structure. The notion of (complete or transitive) \emph{affine structure} on Lie algebras is equivalent to that of (complete) \emph{left-symmetric structure} (see \cite{ElMy-94} and references therein). In fact any positively $\mathbb{Z}$-graded real Lie algebra admits a complete left symmetric structure according to \cite[Theorem 3.1]{DeLe-03}. Therefore, any quasi-cyclic nilpotent Lie algebra admits a complete left symmetric structure. There is an interesting interplay among gradings, expanding and hyperbolic automorphisms and affine structures.

It is not difficult to find recent research on derivations and automorphisms algebras and their applications. For nilpotent Lie algebras we point out \cite{LaOs-14}, \cite{OzEk-17} and \cite{SaSh-18}. So, this research area deserves to be considered.

\begin{acknowledgement}
The authors are partially funded by grant MTM2017-83506-C2-1-P of Ministerio de Econom\'ia, Industria y Competititividad (Spain). The second-named author is supported by a predoctoral research grant of Universidad de La Rioja.
\end{acknowledgement}

\end{document}